\newtheorem{thm}{Theorem}[section]
\newtheorem{cor}[thm]{Corollary}
\newtheorem{defin}[thm]{Definition}
\newtheorem{Definition}[thm]{Definition}
\newtheorem{rmk}[thm]{Remark}
\newtheorem{conjecture}[thm]{Conjecture}
\newcommand{\codim}{\operatorname{codim}}
\newcommand{\chr}{\operatorname{char}}
\def\C{\mathbb{C}}
\def\K{\mathbb{K}}
\def\rk{\operatorname{rk}}
\newcommand{\Hilb}{\operatorname{Hilb}}
\newcommand{\ann}{\operatorname{Ann}}
\newcommand{\Hom}{\operatorname{Hom}}
\newcommand{\rank}{\operatorname{rank}}
\begin{document}

\title{Lefschetz properties for Artinian Gorenstein algebras presented by quadrics}
\author[R. Gondim]{Rodrigo Gondim*}
\address{Universidade Federal Rural de Pernambuco, av. Don Manoel de Medeiros s/n, Dois Irmãos - Recife - PE
52171-900, Brasil}
\email{rodrigo.gondim@ufrpe.br}
\author[G. Zappalà]{Giuseppe Zappalà}
\address{Dipartimento di Matematica e Informatica, Universit\` a degli Studi di Catania, Viale A. Doria 5, 95125 Catania, Italy}
\email{zappalag@dmi.unict.it}
\subjclass[2010]{Primary 13A02, 05E40; Secondary 13D40, 13E10}

 \begin{abstract} We introduce a family of Artinian Gorenstein algebras, whose combinatorial structure characterizes the ones presented by quadrics. Under certain hypotheses these
 algebras have non-unimodal Hilbert vector. In particular we provide families of counterexamples to the conjecture that Artinian Gorenstein algebras presented by quadrics should satisfy the weak Lefschetz property. 
 \end{abstract}

\thanks{*Partially supported  by the CAPES postdoctoral fellowship, Proc. BEX 2036/14-2}

\maketitle

\section{Introduction}

It is very useful in algebraic geometry and in commutative algebra to produce geometric or algebraic objects from combinatoric ones. 
Toric varieties (\cite{CLS}) and toric ideals (\cite{Stu}) are the more stablished of these associations. 
We can also cite tropical varieties (\cite{S}), Stanley-Reisner theory (\cite{St,St2}), Artinian algebras given by posets (\cite{HMMNWW}), as incarnations of this 
fruitful interaction among Algebra, Geometry and Combinatorics. 
In this paper we propose a new construction in this direction, associating simplicial complexes to certain Bigraded Artinian Gorenstein algebras. \par

The combinatoric structure of the simplicial complex, in our association, characterizes the algebras presented by quadrics. To be more precise, 
it determines the quotient algebra and its ideal on a natural embedding. In particular, the Hilbert vector of such Artinian algebra is determined by the face vector of the complex. 
In this way we construct a concrete family of algebras presented by quadrics and whose Hilbert vector is non unimodal. 
These algebras provide counterexamples for two conjectures posed in \cite{MN1,MN2}.\par

The kind of Algebra we introduce is closely related to Stanley-Reisner theory (\cite{St,St2}). The starting point of both constructions is a homogeneous simplicial complex with 
$m \geq 2$ vertices and dimension $d-1 \geq 1$, associated to a set of square-free monomials in $m$ variables of degree $d$. 
Each monomial represents a facet of the simplicial complex. The very distinct point of view is that our construction is also related to Nagata's idealization (\cite{HMMNWW}).
This point of view is related with the vanishing of Hessian determinant (see \cite{Go,GRu,CRS,MW}). \par

A standard graded $\K$-algebra is said to be presented by quadrics if it is isomorphic to the quotient of a polynomial ring over $\K$ by a homogeneous ideal generated by 
quadratic forms. Also called quadratic algebras, they are related with Koszul algebras and Gr\"obner basis 
(see for instance \cite{Co}). 
From a more geometric point of view quadratic ideals appear as homogeneous ideals of very positive embeddings of any smooth projective varieties. 
As pointed out in \cite{MN2}, Artinian Gorenstein algebras presented by quadrics 
are also related to Eisenbud-Green-Harris conjectures motivated by the Cayley-Bacharach theorem (\cite{EGH}). \par

The Lefschetz properties, on the other side, have attracted a great deal of attention over the years of researchers in  
different subjects including Commutative Algebra, Algebraic Geometry and Combinatorics, see \cite{St,St2,MN1,HMMNWW,HMNW}. The present work lies in the border of these three areas.  \par

In \cite{MN2}, the authors studied Artinian Gorenstein algebras presented by quadrics, they provided some constructions of such algebras and described their possible Hilbert vectors 
in low codimension. 
In \cite{MN1} and \cite{MN2} the authors proposed two conjectures. 

\begin{conjecture} {\bf (Migliore-Nagel injective Conjecture)} \label{conj:WMNC}
 For any Artinian Gorenstein algebra of socle degree at least three, presented by
quadrics, defined over a field $\K$ of characteristic zero there exists $L \in A_1$, such that, the multiplication map $\bullet L:A_1 \to A_2$ is injective.
\end{conjecture}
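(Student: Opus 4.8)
The plan is to prove the statement by passing to Macaulay--Matlis duality and reducing the injectivity of $\bullet L$ to a transversality condition. First I would write $A = S/\ann_S(F)$, where $S = \K[x_1,\dots,x_n]$ acts by differentiation on $R = \K[X_1,\dots,X_n]$ and $F \in R_s$ is the dual socle generator, $s \ge 3$. Because $A$ is presented by quadrics its defining ideal has no linear part, so $A_1 = S_1$ and $h_1 = n$. The kernel of $\bullet L$ is $\{\ell \in S_1 : L\ell \in I_2\}$, and since multiplication by $L$ is injective on $S$ (a domain), the map $\bullet L \colon A_1 \to A_2$ is injective exactly when the $n$-dimensional subspace $L\cdot S_1 \subset S_2$ meets $I_2$ trivially. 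Equivalently, by apolarity, injectivity holds iff the first partials $\partial_1(L\circ F),\dots,\partial_n(L\circ F)$ of $G := L\circ F \in R_{s-1}$ are linearly independent, i.e. $G$ is not a cone.

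To produce such an $L$ I would argue by genericity. The function $L \mapsto \rk(\bullet L)$ is lower semicontinuous on $A_1$, so the locus where injectivity fails is Zariski closed in $\P(A_1)$; it therefore suffices either to exhibit one good $L$ or to show this bad locus is proper. Taking $L$ generic, two subspaces of dimensions $n$ and $\dim I_2$ inside $S_2$ meet trivially precisely when $n + \dim I_2 \le \binom{n+1}{2}$, that is when $h_2 \ge h_1$. So, after this reduction, the argument hinges on the inequality $h_2 \ge h_1$, together with the claim that once it holds the varying family $\{L\cdot S_1\}_L$ is rich enough to realize the required transversal position (which I would check by a tangent-space computation on the incidence variety, or by producing one explicit transversal $L$ and invoking semicontinuity).

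The hard part will be establishing $h_2 \ge h_1$ for Artinian Gorenstein algebras presented by quadrics. Here I would try to exploit quadratic generation to bound $\dim I_2$ from above: since $I$ is generated in degree two and $A$ is Gorenstein of socle degree $s \ge 3$, one would hope that the perfect pairing $A_2 \times A_{s-2} \to A_s \cong \K$, combined with the surjection $S_{s-2}\otimes I_2 \to I_s$, forces enough of the quadrics to be consumed in higher degrees to keep $\dim I_2$ small. This is exactly the step I expect to resist: I see no a priori reason that quadratic generation together with the Gorenstein symmetry $h_i = h_{s-i}$ should prevent a Gorenstein quadratic ideal from carrying many independent quadrics (large $\dim I_2$, hence $h_2 < h_1$) while still attaining socle degree $\ge 4$. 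The injectivity we seek is the algebraic shadow of the non-vanishing of the first Hessian of $F$, and that non-vanishing can genuinely fail for special $F$ (the Perazzo and Gordan--Noether circle of forms). I therefore anticipate the proof to stall precisely at the unimodality bound $h_2 \ge h_1$, and it is there that a careful combinatorial construction of the dual generator $F$ would decide whether the conjecture holds.
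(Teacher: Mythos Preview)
The statement you are attempting to prove is a \emph{conjecture}, and the paper does not prove it---on the contrary, the paper \emph{disproves} it. Corollary~\ref{cor:matador} constructs, for every socle degree $d \ge 4$ and sufficiently large codimension, Artinian Gorenstein algebras presented by quadrics (the Turan algebras $TA(a_1,\dots,a_{d-1})$) whose Hilbert vector is totally non-unimodal, in particular with $h_1 > h_2$. For such an algebra no linear form $L$ can make $\bullet L : A_1 \to A_2$ injective, simply because $\dim A_1 > \dim A_2$.

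Your own diagnosis is exactly right: the argument you sketch reduces everything to the inequality $h_2 \ge h_1$, and you correctly flag this as the step most likely to resist. It does more than resist---it is false in general, and the ``careful combinatorial construction of the dual generator $F$'' you anticipate is precisely what the paper carries out. The dual socle generator is the bihomogeneous form $f_\Delta = \sum_i x_i g_i$ of monomial square-free type attached to a Turan complex; Theorem~\ref{thm:mainpresentedbyquadrics} shows the associated algebra is presented by quadrics, and the face-counting formula $h_k = e_k + e_{d-k}$ of Theorem~\ref{thm:mainideal} yields $h_1 > h_2$ once the parameters $a_i$ are large. So there is no salvaging the proof: the conjecture is false, and the obstruction lies exactly where you located it.
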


\begin{conjecture} {\bf (Migliore-Nagel WLP Conjecture)}\label{conj:SMNC}
 Any Artinian Gorenstein algebra presented by quadrics, over a field $\K$ of characteristic zero, has the Weak Lefschetz Property, that is, there exists $L \in A_1$ such that all the maps $\bullet L: A_i \to A_{i+1}$ have maximal rank.
\end{conjecture}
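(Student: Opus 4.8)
The plan is not to prove Conjecture~\ref{conj:SMNC} but to \emph{refute} it, the mechanism being non-unimodality. The starting point is the standard reduction: if an Artinian Gorenstein algebra $A$ of socle degree $s$ admits an $L\in A_1$ for which every multiplication $\bullet L\colon A_i\to A_{i+1}$ has maximal rank, then, combined with the Gorenstein symmetry $\dim_\K A_i=\dim_\K A_{s-i}$, this forces the Hilbert vector of $A$ to be unimodal: surjectivity of $\bullet L$ propagates to higher degrees, injectivity is dual to surjectivity in the complementary degree, and the two regimes meet in the middle, so the Hilbert function strictly increases up to some degree and is non-increasing afterwards. Hence it suffices to exhibit, over $\Q$ (which has characteristic zero), a single Artinian Gorenstein algebra presented by quadrics whose Hilbert vector is not unimodal; in fact we will produce infinite families.

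First I would make precise the construction announced in the introduction: starting from a homogeneous simplicial complex $\Delta$ on $m\ge 2$ vertices of dimension $d-1\ge 1$ --- equivalently, from the set of degree-$d$ square-free monomials attached to its facets --- and in the spirit of Nagata's idealization, one passes to a bigraded polynomial ring $Q$ and a bihomogeneous dual socle generator $F$ built from the facet monomials of $\Delta$, twisted by the auxiliary variable(s). Then $A=Q/\ann(F)$ is, by Macaulay--Matlis duality, automatically a bigraded Artinian Gorenstein algebra. The two points that require work are: (i) $\ann(F)$ is generated by quadrics, and (ii) the Hilbert vector of $A$ is the one predicted by the face vector of $\Delta$.

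Point (ii) is the softer one: the Hilbert function of $A$ is the list of ranks of the catalecticant (apolarity) maps of $F$, and the bigraded combinatorics turns these ranks into pure counts of faces of $\Delta$, so $h(A)$ becomes an explicit function of $(f_{-1},f_0,\dots,f_{d-1})$. Point (i) is the heart of the matter and the step I expect to be the main obstacle: one must show degree by degree that the quadratic part of $\ann(F)$ already cuts out the whole ideal, i.e.\ that a specific family of multiplication maps between the graded pieces has maximal rank; this is exactly where the combinatorial structure of $\Delta$ is both used and constrained, and the catalecticant bookkeeping in the bigraded setting is delicate enough that identifying precisely which complexes are admissible is the crux of the argument. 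It is presumably here that the ``certain hypotheses'' on $\Delta$ enter.

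With (i) and (ii) in hand, the endgame is combinatorial design. Since $h_n(A)$ is a prescribed combination of face numbers and is symmetric by the Gorenstein property, one chooses $\Delta$ in a well-chosen infinite family --- with the number of vertices large and comparatively few faces in the intermediate dimensions --- so that $h(A)$ develops a strict valley $h_{n-1}>h_n<h_{n+1}$ below the middle degree. Each such $A$ is then Artinian Gorenstein, presented by quadrics, defined over $\Q$, and non-unimodal, hence fails the Weak Lefschetz Property, contradicting Conjecture~\ref{conj:SMNC}. Moreover, whenever the family can be tuned so that the valley occurs already at $h_1>h_2$, no $\bullet L\colon A_1\to A_2$ can be injective, so these examples also refute the injectivity Conjecture~\ref{conj:WMNC}.
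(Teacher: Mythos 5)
You are right that the paper's relation to this statement is a refutation, not a proof, and your high-level strategy is exactly the paper's: build bigraded Artinian Gorenstein algebras $A=Q/\ann_Q(f)$ from simplicial complexes via Macaulay--Matlis duality, show that for suitable complexes the annihilator is quadratic and the Hilbert vector is a face count, then tune the complex so that the $h$-vector is non-unimodal (WLP forces unimodality, and $h_1>h_2$ also kills Conjecture~\ref{conj:WMNC}). But as it stands your proposal has a genuine gap: the two steps that actually carry the refutation are announced rather than done, and you yourself flag step (i) as ``the main obstacle'' without resolving it. Concretely, you never identify \emph{which} complexes $\Delta$ give an algebra presented by quadrics, and you never exhibit a concrete family whose $h$-vector is non-unimodal; without both, no counterexample exists and the conjecture is not refuted. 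In the paper these are precisely Theorems~\ref{thm:mainideal} and \ref{thm:mainpresentedbyquadrics}: for $f_\Delta=\sum x_ig_i$ of monomial square free type the ideal $\ann_Q(f_\Delta)$ is generated by $(X_1,\dots,X_n)^2$, the $U_i^2$, monomials on non-faces, monomials $X_iF_i$ with $F_i$ not a subface of $G_i$, and binomials $X_i\tilde G_i-X_j\tilde G_j$; and $A_\Delta$ is presented by quadrics \emph{iff} $\Delta$ is a facet connected flag complex (flagness reduces the non-face monomials to degree $2$, facet connectedness lets the binomials be written as sums of multiples of $(X_iU_i-X_jU_j)$-type quadrics). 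Your appeal to ``maximal rank of catalecticant maps'' is not how this goes and would not by itself tell you the degrees of the generators.

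The second missing ingredient is the explicit family for the endgame. The paper takes Turan complexes $\mathcal{TK}(a_1,\dots,a_{d-1})$ (facet set $\prod_i\{1,\dots,a_i\}$), proves they are facet connected flag complexes, and computes $e_k=s_k(a_1,\dots,a_{d-1})$, so $h_k=s_k+s_{d-k}$; for $a_1\approx\dots\approx a_{d-1}\approx a$ large this gives $h_k\approx\binom{d-1}{k}a^k+\binom{d-1}{d-k}a^{d-k}$, which is \emph{totally} non-unimodal (Corollary~\ref{cor:matador}), hence simultaneously refutes both conjectures for every socle degree $d\ge 4$. Saying ``one chooses $\Delta$ in a well-chosen infinite family with comparatively few faces in the intermediate dimensions'' is not yet an argument: the constraint that $\Delta$ must also be admissible for quadratic presentation (flag and facet connected) works against sparseness in intermediate dimensions, and reconciling the two is exactly the content of the Turan construction. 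A minor further slip: you take $\Delta$ of dimension $d-1$ with facets of degree $d$, whereas with $f=\sum x_ig_i$ of socle degree $d$ the facets $g_i$ have degree $d-1$ and $\Delta$ has dimension $d-2$; the bookkeeping for $h_k=e_k+e_{d-k}$ depends on getting this right.
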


In \cite{MN2} the authors proved the WLP-conjecture for complete intersection of quadratic forms and presented computational evidence for the conjectures in low codimension. 
We want to stress the fact that as soon as the codimension increases with respect to the socle degree surprising phenomena begin to appear. For instance, in codimension $\leq 2$ every 
Artinian algebras have the strong Lefschetz property (see \cite{HMNW}) and in codimension $\leq 3$ every Artinian Gorenetein algebra have unimodal Hilbert vector (see \cite{St3}), on the contrary, in high codimension
the Hilbert vector of a Artinian Gorensten algebra does not need to be unimodal. In \cite{BL}, the authors studied Artinian Gorenstein algebras whose Hilbert vector are non-unimodal, they 
appear in codimension $\geq 5$. 

\par

We recall that the Lefschetz properties for standard graded Artinian $\K$-algebras are algebraic abstractions motivated by the Hard Lefschetz Theorem on the cohomology rings of smooth complex projective varieties; see for instance the survey  
\cite{La} for the theorem and \cite{Ru} for an overview. The Poincaré duality for these cohomology rings inspired the definition of Poincaré duality algebras which, 
in this context, is equivalent to the Gorenstein hypothesis (see \cite{MW} and \cite{Ru}). In \cite{Wa2} and \cite{MW} the authors used Macaulay-Matlis duality in characteristic 
zero to present the Artinian Gorenstein algebra as $A=Q/\ann_Q(f)$ where $f \in R=\K[x_1,\ldots,x_n]$ a polynomial ring and $Q=\K[X_1,\ldots,X_N]$ the associated ring of differential 
operators.  \par

Our strategy to construct standard graded Artinian Gorenstein algebras presented by quadrics is to deal with the simplest ones, which are those whose defining ideal contains the complete intersection 
$(x_1^2,\ldots,x_n^2)$. This assumption forces all monomials that occur in $f$ to be square free. As a matter of fact we deal with bihomogeneous forms of bidegree $(1,d-1)$ of special type, 
here called of monomial square free type (see Definition \ref{defin:bigraded1}). We associate to any bihomogeneous form of monomial square free type, bijectively, 
a pure simplicial complex whose combinatoric structure determines a set of generators of the annihilator ideal, see Theorem \ref{thm:mainideal}. This combinatorial object 
also characterizes the associated algebra presented by quadrics (see Theorem \ref{thm:mainpresentedbyquadrics}). Inspired by the 
famous Turan's graph Theorem (see \cite{Tu}) that characterizes maximal graphs not containing a complete subgraph $K_l$, we introduce a simplicial complex, here called Turan complex, 
whose associated algebra is always presented by quadrics and such that, in very large codimension with respect to the socle degree, the Hilbert vector of $A$ is totally non unimodal, that is 
$\dim A_1 > \dim A_2>\ldots>\dim A_{\lfloor \frac{d}{2}\rfloor} $.  \par

We now describe the contents of the paper in more detail. In the first section we recall the basic definitions and constructions of standard graded Artinian Gorenstein algebras, we deal also with 
the bigraded case which is of particular interest. We recall the Lefschetz properties and Macaulay-Matlis duality. \par

The second section is devoted to the main results and constructions. Theorem \ref{thm:mainideal} describes the annihilator of a standard bigraded form of bidegree $(1,d-1)$ 
of monomial square free type, showing that it is a binomial ideal whose generators are determined by the combinatoric of the associated simplicial complex. It is the main thecnical result. 
 Theorem \ref{thm:mainpresentedbyquadrics} characterizes those algebras are presented by quadrics. 
We introduce the Turan complex in Definition \ref{def:turan} and in Theorem \ref{thm:turanispresentedbyquadrics} we show that Turan algebras are presented by quadrics and determine its Hilbert vector. 
In Corollary \ref{cor:matador} we produce counterexamples to both Migliore-Nagel conjectures in any socle degree $d \geq 4$ and sufficient large codimension. They are Turan algebras of special type.
It is surprising that if the codimension is very large with respect to the socle degree, the Hilbert vector of Turan algebras, that are quadratic and binomial, is totally non-unimodal.
In fact monomial and closely related ideals, in characteristic zero, are expected to have the Weak Lefschetz property (see \cite{MMN}).


\section{Combinatorics, Lefschetz properties and Macaulay-Matlis duality} 

\subsection{Combinatorics} \label{comb}
 
\begin{defin}\rm
 Let $V = \{u_1,\ldots,u_m\}$ be a finite set. A simplicial complex $\Delta$ with vertex set $V$ is a collection of subsets of $V$, {\it i.e.} a subset of the power set $ 2^V$, such that for all $A \in \Delta$ and for 
 all subset $B \subseteq A$ we have $B \in \Delta$. The members of $\Delta$ are referred as faces and maximal faces (with respect to the inclusion) are the facets. 
 If $A \in \Delta$ and $|A|=k$, it is called a $(k-1)$-face, or a face of dimension $k-1$. 
 If all the facets have the same dimension $d$ the complex is said to be homogeneous of (pure)
 dimension $d$. We say that $\Delta$ is a simplex if  $\Delta = 2^{V}$.
\end{defin}

In our context we identify the faces of a simplicial complex with monomials in the variables $\{u_1,\ldots,u_m\}$. Let $\K$ be any field and let $R = \K[u_1,\ldots,u_m]$ be the polynomial ring. To any finite subset $F \subset \{u_1,\ldots,u_m\}$ 
we associate the monomial $m_F = \displaystyle \prod_{u_i \in F}u_i $. In this way there is a natural bijection between the simplicial complex $\Delta$ and the set of the monomials $m_F,$ where $F$ a facet of $\Delta$.


\subsection{The Lefschetz Properties}

Let $\K$ be an infinite field and  $R=\K[x_1,\ldots,x_n]$ be the polynomial ring in $n$ indeterminates. 
\begin{defin}\rm
 Let $A$ be a standard graded $\K$-algebra.  We say that $A$ is presented by quadrics if $A\simeq R/I$, where $R=\K[x_1,\ldots,x_n]$ and the homogeneous ideal $I$ 
 has a set of generators consisting of quadratic forms. 
\end{defin}
Let $A=R/I$ be an Artinian standard graded $R$-algebra, then $A$ has a decomposition $A=\displaystyle \bigoplus_{i=0}^dA_i,$ as a sum of finite dimensional $\K$-vector spaces with $A_d\ne 0$.

Let $A=R/I$ be an Artinian standard graded $R$-algebra. 
A form $F\in R_d$ induces a $\K$-vector spaces map $\varphi_{i,F}:A_i\to A_{i+d},$ defined by $\varphi_{i,F}(\alpha)=F\alpha,$ for every $\alpha\in A_i.$

\begin{Definition}\rm
We say that $A$ has the {\em Strong Lefschetz property} (in short SLP) if there exists a linear form $L\in R_1$ such that
$\rk\varphi_{i,L^k}=\min\{\dim_{\K} A_i,\dim_{\K} A_{i+k}\},$ for every $i,k.$ 
\end{Definition}

\begin{Definition}\rm
We say that $A$ has the {\em Weak Lefschetz property} (in short WLP) if there exists a linear form $L\in R_1$ such that
$\rk\varphi_{i,L}=\min\{\dim_{\K} A_i,\dim_{\K} A_{i+1}\},$ for every $i.$ 
\end{Definition}


\begin{defin}\rm 
 Let $R=\K[x_1,\ldots,x_n]$ and $A=R/I$ be an Artinian standard graded $R$-algebra, with $I_1=0$. The integer $n$ is said to be the codimension of $A$. 
 If $A_d \neq 0$ and $A_i=0 $ for all $i > d$, then $d$ is called the socle degree of $A$. The Hilbert vector of $A$ is $h_A=\Hilb(A)=(1,h_1,h_2,\ldots,h_d)$, where 
 $h_k = \dim A_k$. We say that $h_A$ is unimodal if there exists $k$ such that $1\leq h_1\leq \ldots \leq h_k \geq h_{k+1}\geq h_d$.
\end{defin}

\begin{rmk}\rm \label{rmk:simplifica}
We recall that an Artinian algebra $A=\displaystyle \bigoplus_{i=0}^dA_i,$ $A_d\ne 0,$ is a {\em Gorenstein algebra} if and only if $\dim_{\K} A_d=1$ and the bilinear pairing 
$$A_i\times A_{d-i}\to A_d$$ inducted by the multiplication is non-degenerated for $0\le i\le d$. 
So we have an isomorphism $A_{i}\simeq\Hom_{\K}(A_{d-i},A_d)$ for $i=0,\ldots,d$.
In particular, $\dim_{\K} A_i=\dim_{\K} A_{d-i},$ for $i=0,\ldots,d$.
Moreover, for every $L\in R_1,$ $\rank\varphi_{i,L}=\rank\varphi_{d-i-1,L},$ for $0\le i\le d.$
\par
Since $A$ is generated in degree $0$ as a $R$-module,  if $\varphi_{i,L}$ is surjective, then $\varphi_{j,L}$ is surjective for every $j\ge i.$ 
Therefore, if $A$ is a Gorenstein Artinian algebra, if $\varphi_{i,L}$ is injective, then $\varphi_{j,L}$ is injective for every $j\le i.$ 
Of course SLP implies WLP. Notice also that the WLP implies the unimodality of the Hilbert vector of $A$. Unimodality in the Gorenstein case implies that $\dim A_{k-1} \leq \dim A_k$ for all $k \leq \frac{d}{2}$.
The converse of these implications are not true, (see \cite{Go}).
\end{rmk}

\par

\subsection{Macaulay-Matlis duality}

Now we assume that $\chr\K=0.$ Let us regard the polynomial algebra $R$ as a module over the algebra $Q=\K[X_1, . . . ,X_n]$ via the identification $X_i = \partial/\partial x_i.$ If $f\in R$ we set
 $$\ann_Q(f)=\{p(X_1,\ldots,X_n)\in Q\mid p(\partial/\partial x_1,\ldots,\partial/\partial x_n)f=0\}.$$
 By Macaulay-Matlis duality we have a bijection:

$$\begin{array}{ccc}
\{\text{Homogeneous ideals of} \ R\} & \leftrightarrow & \{\text{Graded}\ R-\text{submodules}\ \text{of}\ Q \}\\
\operatorname{Ann}_Q(M) & \leftarrow & M\\
I & \rightarrow & I^{-1}
  \end{array}
$$
 
Let $I\subset Q$ be a homogeneous ideal. It is well known that $A=Q/I$ is a Gorenstein standard graded Artinian algebra if and only if there exists a form $f\in R$ such that $I=\ann_Q(f)$ (for more details see, for instance, \cite{MW}).

In the sequel we always assume that $\operatorname{char}(\K)=0$, $A=Q/I$, $I=\ann_Q(f)$ and $I_1=0$. When we need to assume that $\K$ is algebraically closed it will be explicit. All arguments work over $\C$.

We deal with standard bigraded Artinian Gorenstein algebras $A=\displaystyle \bigoplus_{i=0}^d A_i,$ $A_d\ne 0$, 
with $A_k=\displaystyle \bigoplus_{i=0}^k A_{(i,k-1)}$, $A_{(d_1,d_2)}\ne 0$ for some $d_1,d_2$ such that $d_1+d_2=d$, we call $(d_1,d_2)$ the socle bidegree of $A$. Since $A^*_k \simeq A_{d-k} $ and since duality is compatible with 
direct sum, we get $A_{(i,j)}^* \simeq A_{(d_1-i,d_2-j)}$. \\
Let $R=\K[x_1,\ldots,x_n,u_1,\ldots,u_m]$ be the polynomial ring viewed as standard bigraded ring in the sets of variables $\{x_1,\ldots,x_n\}$ and $\{u_1,\ldots,u_m\}$
and let $Q=\K[X_1,\ldots,X_n,U_1,\ldots,U_m]$ be the associated ring of differential operators. 

We want to stress that the bijection given by Macaulay-Matlis duality preserves bigrading, that is, there is a bijection:

$$\begin{array}{ccc}
\{\text{Bihomogeneous ideals of} \ R\} & \leftrightarrow & \{\text{Bigraded}\ R-\text{submodules}\ \text{of}\ Q \}\\
\operatorname{Ann}_Q(M) & \leftarrow & M\\
I & \rightarrow & I^{-1}
  \end{array}
$$

 If $f \in R_{(d_1,d_2)}$ is a bihomogeneous polynomial of total degree $d=d_1+d_2$, 
then $I = \ann_Q(f) \subset Q$ is a bihomogeneous ideal and $A = Q/I$ is a standard bigraded Artinian Gorenstein algebra of socle bidegree $(d_1,d_2)$ and codimension $r=m+n$ if we assume, without lost of generality, that $I_1=0$.

\begin{rmk} \label{rmk:bigradedideal}\rm  If $f \in R_{(d_1,d_2)}$ is a bihomogeneous polynomial of bidegree $(d_1,d_2)$, consider the associated bigraded algebra $A$ of socle bidegree $(d_1,d_2)$.  
 Notice that for all $\alpha \in Q_{(i,j)}$ with $i>d_1$ or $j>d_2$ we get $\alpha(f)=0$, therefore, under these conditions $I_{(i,j)}=Q_{(i,j)}$. As consequence, we have the following decomposition for all $A_k$:
 $$A_k = \displaystyle \bigoplus_{i+j=k,i\leq d_1,j\leq d_2} A_{(i,j)}.$$
Furthermore, for $i<d_1$ and $j<d_1$, the evaluation map $Q_{(i,j)} \to A_{(d_1-i,d_2-j)}$ given by $\alpha \mapsto \alpha(f)$ provides the following short exact sequence:
$$0 \to I_{(i,j)} \to Q_{(i,j)} \to A_{(d_1-i,d_2-j)} \to 0.$$
\end{rmk}

One of our goals is to produce bigraded algebras of socle bidegree $(1,d-1)$ presented by quadrics. In order to achieve this objective we study the ideal of a particular family.

\begin{defin}\rm \label{defin:bigraded1}
 With the previous  notation, all bihomogeneous polynomials of bidegree $(1,d-1)$ can be written in the form
 $$f= x_1g_1+\ldots+x_ng_n,$$
 where $g_i \in \K[u_1,\ldots,u_m]_{d-1}$. We say that $f$ is of {\em monomial square free type} if all 
 $g_i$ are square free monomials. The associated algebra, $A = Q/\ann_Q(f)$, is bigraded, has socle bidegree $(1,d-1)$ and we assume that $I_1=0,$ so $\codim A = m+n$.
\end{defin}

The combinatoric structure inward bihomogeneous polynomials of monomial square free type allows us to give necessary and sufficient 
conditions in order to the associated algebra to be presented by quadrics. On the other hand we construct, 
in sufficiently large codimension, Artinian Gorenstein algebras presented by quadrics failing the WLP.

\break

\section{The main results}

Let $\Delta$ be a homogeneous simplicial complex of dimension $d-2$ whose facets are given by the monomials $g_i \in \K[u_1,\ldots,u_m]_{d-1}$ (see $\S$ \ref{comb}). Let $f  \in \K[x_1,\ldots,x_n,u_1,\ldots,u_m]_{(1,d-1)}$ be the bihomogeneous form of monomial square free type
associated to $\Delta$, that is $f=f_{\Delta}=\displaystyle \sum_{i=1}^nx_ig_i$ (see Difinition \ref{defin:bigraded1}).  The vertex set of $\Delta$ is also called $0$-skeleton and and we write $V=\{u_1,\ldots,u_m\}$. We identify the $1$-skeleton with 
a simple graph $\Delta_1=(V,E)$, hence the $1$-faces are called edges. Since, by differentiation, $X_i(f)=g_i$, we can identify each facet $g_i$ with the differential operator $X_i$. We denote by $e_k$ the number of $(k-1)$-faces, hence $e_1=m$ and $e_{d-1}=n$ and we put 
$e_0:=1$ and $e_j:=0$ for $j \geq d-1$. 
Let $A=Q/\ann_Q(f)$ be the associated algebra, we suppose that $I_1=0$. We identify the faces of $\Delta$ with the dual differential operators by $u_i \leftrightarrow U_i$. 
If $p \in \K[u_1,\ldots,u_m]$ is a square free monomial, we denote by $P \in \K[U_1,\ldots,U_m]$ the dual differential operator $P=p(U_1,\ldots,U_m)$. Notice that $P(p)=1$.

\begin{defin}
 Let $\Delta$ be a homogeneous simplicial complex of dimension $d-2$, the associated algebra is $A_{\Delta} = Q/\ann(f_{\Delta})$.
\end{defin}

\begin{thm}\label{thm:mainideal}
 Let $\Delta$ be a homogeneous simplicial complex of dimension $d-2$ and let $A_{\Delta}$ be the associated algebra. Then \begin{enumerate}
                                                                                      \item $A=\displaystyle \bigoplus_{k=0}^d A_k$ where $A_k = A_{(0,k)} \oplus A_{(1,k-1)}$.
                                                                                     \item $A_{(0,k)}$ has a basis identified with the $(k-1)$-faces of $\Delta$, hence $\dim A_{(0,k)} = e_k$.
                                                                                     \item By duality, $A_{(1,k-1)}^* \simeq A_{(0,d-k)}$, and a basis for $A_{(1,k-1)}$ can be chosen 
by taking, for each $(d-k-1)$-face of $\Delta$, a monomial $X_i\tilde{G}_i$ such that $X_i\tilde{G}_i(f)$ represents it.
\item The Hilbert vector of $A$ is given by $h_k = \dim A_k = e_k+e_{d-k}$.
\item  Furthermore, $I=\ann_Q(f)$ is generated by 
\begin{enumerate}
 \item $(X_1,\ldots,X_n)^2;$ $U_1^2,\ldots,U_m^2;$
 \item The monomials in $I$ representing minimal non faces of $\Delta;$
 \item The monomials $X_iF_i$ where $f_i$ does not represent a subface of $g_i;$
 \item The binomials $X_i\tilde{G}_i-X_j\tilde{G}_j$ where 
$g_i=\tilde{g}_ig_{ij}$ and $g_j=\tilde{g}_jg_{ij}$ and $g_{ij}$ represents a common subface of $g_i,g_j$. 

\end{enumerate}

                                                                                     \end{enumerate}

 \end{thm}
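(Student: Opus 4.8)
The plan is to analyze the algebra $A_\Delta = Q/\ann_Q(f_\Delta)$ degree by degree, using the bigraded decomposition from Remark \ref{rmk:bigradedideal}. First I would fix notation: since $f = \sum x_i g_i$ has bidegree $(1,d-1)$, the differential operators that do not kill $f$ live in $Q_{(0,j)}$ for $j \le d-1$ and $Q_{(1,j)}$ for $j \le d-1$. Because each $g_i$ is square free, the operator $U_\ell^2$ annihilates $f$, and any product $X_iX_j$ annihilates $f$ since $f$ is linear in the $x$'s; this immediately puts $(X_1,\dots,X_n)^2$ and $U_1^2,\dots,U_m^2$ in $I$, giving generators of type (a). Modulo these, a spanning set of $Q$ in each bidegree consists of square free monomials $P$ in the $U$'s (bidegree $(0,|P|)$) and monomials $X_iP$ with $P$ square free in the $U$'s (bidegree $(1,|P|)$).

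Next I would compute how these act on $f$. For a square free $U$-monomial $P = u_{S}^{\vee}$ (where $S \subseteq V$), one has $P(f) = \sum_i x_i P(g_i)$, and $P(g_i)$ is nonzero (equal to the complementary square free monomial $g_i/m_S$) exactly when $S$ is a face of the facet $g_i$, i.e.\ when $m_S \mid g_i$. Thus $P(f) = 0$ iff $S$ is not contained in any facet, i.e.\ $S \notin \Delta$; this shows $A_{(0,k)}$ is spanned by (the classes of) the square free $U$-monomials of degree $k$ that correspond to $(k-1)$-faces of $\Delta$, and that these are linearly independent because the $x_i$'s are independent variables (the coefficient of $x_i$ in $P(f)$ detects membership in $g_i$). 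This proves (2), and (1) follows by collecting the two strands $A_{(0,k)}$ and $A_{(1,k-1)}$. For (3), I would invoke the Gorenstein duality $A_{(1,k-1)}^* \simeq A_{(0,d_2-(k-1))} = A_{(0,d-k)}$ recorded before Definition \ref{defin:bigraded1}, so $\dim A_{(1,k-1)} = e_{d-k}$; to get the stated \emph{basis}, for each $(d-k-1)$-face $\tilde g$ of $\Delta$ pick some facet $g_i \supseteq \tilde g$, write $g_i = \tilde g \cdot \tilde g_i^{c}$ and set the basis element to be (the class of) $X_i \widetilde G_i$ where $\widetilde G_i$ is the dual of the complementary monomial; the pairing with $U$-monomials shows these are dual to a basis of $A_{(0,d-k)}$, hence a basis. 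Part (4) is then immediate: $h_k = \dim A_{(0,k)} + \dim A_{(1,k-1)} = e_k + e_{d-k}$.

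The substantive part is (5), describing generators of $I$. The containments are all direct checks: type (b), a square free $U$-monomial $P$ with $S \notin \Delta$ minimal gives $P(f)=0$ by the computation above, so $P \in I$; type (c), $X_i F_i$ with $f_i$ (i.e.\ $m_{S}$) not a subface of $g_i$ means $F_i(g_i) = 0$, hence $X_iF_i(f) = x_i F_i(g_i) = 0$; type (d), if $g_i = \tilde g_i g_{ij}$ and $g_j = \tilde g_j g_{ij}$ with common subface $g_{ij}$, then $X_i\widetilde G_i$ and $X_j\widetilde G_j$ both send $f$ to $\pm x_i x_j \cdot (\text{monomial in } U\text{'s applied})$\,—\,more precisely both evaluate to the same element $g_{ij}$ up to the surviving variable, so their difference kills $f$. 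The real work is the reverse inclusion: show these generate all of $I$. I would argue degree by degree. Work modulo the ideal $J$ generated by (a): in $Q/J$ each bidegree has the square free spanning set above. In bidegree $(0,k)$, the kernel of evaluation is spanned by square free $U$-monomials whose support is a non-face; each such support contains a minimal non-face, so the monomial is a multiple of a type-(b) generator. In bidegree $(1,k)$, a general element is $\sum_i X_i P_i$ with $P_i$ square free $U$-monomials; it lies in $I$ iff $\sum_i x_i P_i(g_i) = 0$ in $R$, and since the $x_i$ are independent this forces $P_i(g_i) = 0$ for every $i$ — but then each $X_iP_i$ is already a type-(c) generator (or lies in $J$), \emph{provided} we have first used the type-(d) binomials to reduce: the point is that two monomials $X_iP$, $X_jP'$ with $P(g_i)$ and $P'(g_j)$ representing the \emph{same} face are congruent modulo a type-(d) generator, so after such reductions a relation $\sum x_iP_i(g_i)=0$ among genuinely surviving basis terms must be trivial termwise. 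So the flow is: reduce by (a) to square free monomials, reduce by (d) to a chosen set of representatives (one per face), then (b) and (c) account for exactly the monomials that die.

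I expect the main obstacle to be the bookkeeping in the reverse inclusion for bidegree $(1,k)$: one must show that the type-(d) binomials suffice to identify \emph{all} pairs of monomials $X_i\widetilde G_i$, $X_j\widetilde G_j$ that represent the same element of $A_{(1,k-1)}$, i.e.\ that the equivalence generated by the listed binomials is the full equivalence ``$X_i\widetilde G_i(f)$ and $X_j\widetilde G_j(f)$ are equal modulo $I$''. This is where homogeneity of $\Delta$ (all facets of the same dimension $d-1$) is used: given a common subface $h$ of $g_i$ and $g_j$, one can walk from the pair $(i,\text{complement of }h\text{ in }g_i)$ to $(j,\text{complement of }h\text{ in }g_j)$ through a chain of facets each obtained by a single vertex swap, each step being precisely a type-(d) binomial; the connectivity of this walk is the crux and should be proved by induction on the size of the symmetric difference of $g_i$ and $g_j$, noting that at each stage the intermediate set is still a facet (this is automatic once we are inside the complex) so the binomial is among our generators. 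Once connectivity is in hand, a dimension count against part (4) — the number of generators minus relations matches $\dim Q_{(1,k)} - \dim A_{(1,k-1)} = \dim Q_{(1,k)} - e_{d-k}$ — closes the argument without needing to track the ideal membership any more finely.
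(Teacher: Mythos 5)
Your overall plan (bidegree-by-bidegree analysis, duality for $A_{(1,k-1)}$, then a spanning argument for $I$ in each bidegree) is the same as the paper's, and parts (1)--(4) and the bidegree $(0,k)$ part of (5) are fine. But the key step for bidegree $(1,k-1)$ contains a genuine error. For an element $\sum_i X_iP_i$ the evaluation on $f=\sum_j x_jg_j$ is $\sum_i P_i(g_i)$, \emph{not} $\sum_i x_iP_i(g_i)$: the operator $X_i=\partial/\partial x_i$ removes the variable $x_i$, so the $x$'s do not separate the terms and your conclusion that membership in $I$ forces $P_i(g_i)=0$ for every $i$ is false. Cross-cancellation between different indices $i$ is exactly what produces the binomial generators (d); if your termwise claim were true, those binomials would be redundant, which they are not (e.g.\ $X_iG_i-X_jG_j$ lies in $I$ whenever $g_i,g_j$ share a subface, but is in no way a combination of the monomial generators). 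The correct argument, as in the paper, is: modulo the generators (a) and (c), the surviving monomials $X_i\tilde G_i$ (with $\tilde G_i$ a subface of $G_i$) are mapped by evaluation onto the face basis of $A_{(0,d-k)}$; since distinct faces are linearly independent in $A$ (your part (2) applied in degree $d-k$), the kernel of this map is spanned by differences of monomials evaluating to the same face, and these differences are precisely the listed binomials. Your closing ``dimension count against part (4)'' is indeed how this is finished, but it only works once the evaluation computation is corrected.

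Second, the step you single out as the crux --- walking from $g_i$ to $g_j$ through a chain of facets differing by single vertex swaps, which you claim ``is automatic once we are inside the complex'' --- is both unnecessary here and false in general. It is unnecessary because the generator list (d) in this theorem contains the binomial $X_i\tilde G_i-X_j\tilde G_j$ for \emph{every} pair of facets with a common subface and in every degree, so no chain argument is needed to generate the kernel as a vector space. It is false in general because the existence of such chains is exactly the ``facet connected'' hypothesis, which a homogeneous simplicial complex need not satisfy; this chain argument is what the paper uses later, in Theorem \ref{thm:mainpresentedbyquadrics}, to decide when the binomials (d) are consequences of quadrics, and there it must be taken as a hypothesis. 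In short, you have imported the difficulty of the next theorem into this one while leaving a hole in the step that actually needs care.
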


\begin{proof}
 It is easy to see that $A_{(0,k)}$ is generated by the monomials of degree $k$ that represent $(k-1)$-faces, since they are the only ones that do not annihilate $f$. Now we show that they are linearly independent over $\K$. For any $(k-1)$-face 
 $\omega$, let $\Omega$ be the associated monomial of $Q_{(0,k)}$, let $\Omega_1,\ldots,\Omega_s$ be all of them. Since $\Omega(f) = \displaystyle \sum_{i=1}^nx_i\Omega(g_i)$, if we take any  linear 
 combination $$0=\displaystyle \sum_{j=1}^s c_j\Omega_j(f) = \displaystyle \sum_{j=1}^s c_j \displaystyle \sum_{i=1}^nx_i\Omega_j(g_i)= \displaystyle \sum_{i=1}^n x_i \displaystyle \sum_{j=1}^s c_j \Omega_j(g_i).$$
We get $\displaystyle \sum_{j=1}^s c_j \Omega_j(g_i)=0$ for all $i=1,\ldots,n$. For a fixed $i$, $\Omega_j(g_i)$ are distinct monomials or zero, but for each $j$ there is a $i$ such that 
$\Omega_j(g_i)\neq 0$, therefore $c_j=0$ for all $j =1,\ldots,s$. The other assertions about $A$ are now clear. \\ 
 Notice that $I_{(i,j)}= Q_{(i,j)}$ for all $i \geq 2$ and it is generated by $I_{(2.0)} = (X_1,\ldots,X_n)^2$. Now we describe 
 $I_{(0,k)}$ and $I_{(1,k-1)}$. Consider the exact sequence given by evaluation: 
 $$0 \to I_{(0,k)} \to Q_{(0,k)} \to A_{(1,d-1-k)}\to 0.$$
 Since $\dim A_{(1,d-1-k)} = \dim A_{(1,d-1-k)}^* = \dim A_{(0,k)} = e_k$, we get $\dim I_{(0,k)} = \dim Q_{(0,k)} - e_k$. Since $\dim A_{(0,k)} =e_k $ and it has a basis given 
 by the $(k-1)$-faces of $\Delta$ and since all the other $\dim Q_{(0,k)} - e_k$ monomials are linearly independent elements of $I_{(0,k)}$, they form a basis for it. Consider the sequence given by evaluation: 
 $$0 \to I_{(1,k-1)} \to Q_{(1,k-1)} \to A_{(0,d-k)}\to 0.$$
We have $\dim I_{(1,k-1)} = \dim Q_{(1,k-1)} - e_{d-k}$. Let us write $Q_{(1,k-1)} = \bar{I}_{(1,k-1)}\oplus \tilde{Q}_{(1,k-1)}$ where $\bar{I}_{(1,k-1)} $ is the $\K$-vector space spanned 
by the monomials $X_iF_i$ where $F_i$ does not represent a subface of $G_i$. Of course $\bar{I}_{(1,k-1)}\subset I_{(1,k-1)}$ and $\tilde{Q}_{(1,k-1)}$ is spanned by all the monomials 
$X_i\tilde{G}_i$ where $\tilde{G}_i$ is a subface of $G_i$. The exact sequence given by evaluation restricted to $\tilde{Q}_{(1,k-1)}$ becomes
$$0 \to \tilde{I}_{(1,k-1)} \to \tilde{Q}_{(1,k-1)} \to A_{(0,d-k)}\to 0.$$
Hence, $I_{(1,k-1)} = \tilde{I}_{(1,k-1)}\oplus \bar{I}_{(1,k-1)}$, since $X_i\tilde{G}_i(f)$ is a face of $\Delta$, $\tilde{I}_{(1,k-1)}$ is generated by the binomials 
$X_i\tilde{G}_i-X_j\tilde{G}_j$ such that $X_i\tilde{G}_i(f)= g_{ij}=X_j\tilde{G}_j(f)$ where $g_{ij}$ is a common subface of $g_i,g_j$, $g_i=\tilde{g_i}g_{ij}$ and $g_j=\tilde{g_j}g_{ij}$. 
The result follows.
 \end{proof}

 \begin{defin}\rm
  Let $\Delta$ be a homogeneous simplicial complex of dimension $d-2$. We say that $\Delta $ is facet connected if for any pair of facets $F,F'$ of $\Delta$ there exists a sequence of facets, 
  $F_0=F,F_1,\ldots,F_s=F'$ such that $F_i \cap F_{i+1}$ is a $(d-3)$-face. We say that $\Delta$ is a flag complex if every collection of pairwise adjacent vertices spans a simplex.
   \end{defin}
 
 \begin{rmk}\rm 
 The difinition of a flag complex $\Delta$ is equivalent to say that for all complete subgraphs $H=K_l \subset \Delta_1$ for $l \geq 3$,
  there exists a $(l-1)$-face $F \in \Delta_l$ such that $H$ is the first skeleton of $F$.  
 In particular, if $\Delta$ is a flag complex, then $\Delta_1$ does not contain any $K_{d-1}$.
  
 \end{rmk}

\begin{thm}\label{thm:mainpresentedbyquadrics}
Let $\Delta$ be a homogeneous simplicial complex of dimension $d-2\geq 1$ and let $A_{\Delta}$ be the associated algebra. $A$ is presented by quadrics if and only if $\Delta$ is a facet connected flag complex.
\end{thm}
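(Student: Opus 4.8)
The plan is to read the statement off the explicit description of $I=\ann_Q(f)$ provided by Theorem \ref{thm:mainideal}(5). Since $I_1=0$ and every generator in that list has degree at least two, the quadratic part $I_2$ is exactly the $\K$-span of the quadratic members of the list, and $A=A_\Delta$ is presented by quadrics if and only if $I$ equals the ideal $(I_2)$ these quadrics generate. So the whole argument reduces to deciding, generator by generator, whether the members of list (5) of degree $\ge 3$ lie in $(I_2)$: type (a) is quadratic and costs nothing, so the content is entirely in types (b), (c), (d), and it is this analysis that will force $\Delta$ to be a facet connected flag complex.

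\emph{Types (b) and (c).} For a homogeneous complex of dimension $d-2$ one checks that $\Delta$ is flag if and only if every minimal non-face of $\Delta$ has exactly two vertices: a minimal non-face with $\ge 3$ vertices is a complete subgraph of $\Delta_1$ not spanning a face, and conversely. Hence if $\Delta$ is flag all type (b) generators are quadratic. If $\Delta$ is not flag, pick a minimal non-face $\sigma$ with $|\sigma|\ge 3$; the square-free monomial $M_\sigma=\prod_{u_i\in\sigma}U_i$ lies in $I$, and the image of $(I_2)$ under $X_1\mapsto 0,\dots,X_n\mapsto 0$ is the ideal of $\K[U_1,\dots,U_m]$ generated by the $U_i^2$ and the quadratic monomials of the non-edges of $\Delta_1$; since every pair of vertices of the minimal non-face $\sigma$ is an edge, $M_\sigma$ is divisible by none of these, so $M_\sigma\notin(I_2)$. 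Thus flagness is necessary, and under flagness type (b) is harmless. Type (c) is always harmless: if $f_i$ is not a subface of $g_i$ then some $u_a$ divides $f_i$ but not $g_i$, so $X_iU_a(f)=U_a(g_i)=0$ and the quadric $X_iU_a\in I$ divides $X_iF_i$.

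\emph{Type (d): the binomials.} I would first cut down to one binomial per pair of facets: if a common subface $g_{ij}$ of $g_i,g_j$ divides another, $h$, the type (d) binomial for $g_{ij}$ is $(h/g_{ij})(U)$ times that for $h$, so it suffices to handle $\beta(i,j):=X_i\tilde{G}_i-X_j\tilde{G}_j$ in which $g_{ij}$ is the monomial of the face $g_i\cap g_j$ (so $\tilde{g}_i,\tilde{g}_j$ have disjoint supports). When $g_i,g_j$ are adjacent, $\beta(i,j)$ is one of the type (d) quadrics, so lies in $I_2$. In general I would induct on $p=d-1-|g_i\cap g_j|$, looking for an intermediate facet $g_k$ that contains $g_i\cap g_j$ and meets each of $g_i$ and $g_j$ in a strictly larger face: then $\beta(i,j)=\beta(i,k)+\beta(k,j)$ up to $\K[U]$-monomial factors, and both summands have smaller $p$. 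Such a $g_k$ exists once there is an edge of $\Delta_1$ joining a vertex of $g_i\setminus g_j$ to a vertex of $g_j\setminus g_i$ — flagness then makes $(g_i\cap g_j)\cup\{a,b\}$ a face, which extends to a facet by purity. To manufacture such an edge I would use facet connectedness together with the rigidity that in a pure flag complex of dimension $d-2$ two adjacent facets differ by a pair of \emph{non}-adjacent vertices (otherwise that $d$-clique would span a face of dimension $d-1$): starting from a chain of adjacent facets from $g_i$ to $g_j$ and applying this rigidity repeatedly, one reroutes the chain to run inside the star of $g_i\cap g_j$, which both yields the required edge and telescopes directly to $\beta(i,j)\in(I_2)$. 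Conversely, if $\Delta$ is flag but fails to be facet connected one locates facets $g_i,g_j$ that cannot be joined by a chain of adjacent facets through $g_i\cap g_j$, and shows by a coefficient argument, as in type (b), that $\beta(i,j)\in I\setminus(I_2)$ — the monomial $X_i\tilde{G}_i$ cannot be produced inside $(I_2)$ for want of such a chain. So facet connectedness is necessary as well, and combining the four types gives the equivalence.

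\emph{The expected main obstacle.} Everything outside type (d) is bookkeeping; the real work is the localization step in (d) — pushing an arbitrary chain of adjacent facets from $g_i$ to $g_j$ into the star of the prescribed face $g_i\cap g_j$, so that the telescoping closes. This is the only point where flagness is genuinely used beyond type (b), through the ``adjacent facets swap a non-adjacent pair'' rigidity, and it needs care to confirm that the plain facet-connectedness hypothesis actually suffices to localize every such chain — equivalently, that each face-link of a facet connected flag complex remains facet connected. I expect this combinatorial rerouting to be the only delicate point of the proof.
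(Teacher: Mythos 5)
Your handling of types (a)--(c), the adjacent case of (d), the reduction to $g_{ij}=g_i\cap g_j$, and both necessity arguments is sound and matches the paper's proof in substance. The genuine gap is exactly the point you flagged at the end and left unverified: the claim that a chain of adjacent facets from $g_i$ to $g_j$ can be rerouted to run inside the star of $g_i\cap g_j$ (equivalently, that face links of a facet connected flag complex are again facet connected). This rerouting lemma is false. Take $d=4$, vertex set $\{v,a,b,c,d,p,q\}$ and facets $\{v,a,b\},\{a,b,p\},\{b,p,q\},\{p,q,c\},\{q,c,d\},\{v,c,d\}$. This complex is pure of dimension $2$, facet connected (the facets as listed form a chain with consecutive intersections $ab,bp,pq,qc,cd$), and flag (one checks the $1$-skeleton contains exactly these six triangles and no $4$-clique), yet the link of $v$ is the disconnected graph with edges $ab$ and $cd$, so the two facets through $v$ cannot be joined inside the star of $v$. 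Your induction stalls here as well: there is no edge joining $\{a,b\}$ to $\{c,d\}$, hence no intermediate facet through $v$.

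Worse, the defect is not only in the argument but in the statement you are trying to prove, so no repair of the rerouting step can succeed. For this $\Delta$, writing $g_1=vab$, $g_6=vcd$ and taking the common subface $v$, the binomial $\beta=X_1U_aU_b-X_6U_cU_d$ lies in $I$ but not in $(I_2)$: in bidegree $(1,2)$ one has $(I_2)_{(1,2)}=I_{(1,1)}Q_{(0,1)}+I_{(0,2)}Q_{(1,0)}$; every element of $I_{(0,2)}$ has zero coefficient on the face monomial $U_aU_b$, and every element of $I_{(1,1)}$ has zero coefficient on $X_1U_a$ and $X_1U_b$ because $g_1$ is the unique facet containing the edge $vb$ (resp.\ $va$); hence every element of $(I_2)_{(1,2)}$ has zero coefficient on the monomial $X_1U_aU_b$, while $\beta$ has coefficient $1$. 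So $A_\Delta$ is not presented by quadrics although $\Delta$ is a facet connected flag complex; sufficiency requires the stronger hypothesis that the star of every face be facet connected (a condition Turan complexes do satisfy). Note that the paper's own proof makes the same silent assumption at the ``summing up'' step, since the telescoping forces every intermediate facet to contain $g_{ij}$ --- so the delicate point you isolated is a real gap there too; but as a proof of the theorem as stated, your proposal cannot be completed, because the localization claim it hinges on fails.
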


\begin{proof} Suppose that $\Delta$ is a facet connected flag complex and let $I=\ann_Q(f_{\Delta})$. By applying Theorem \ref{thm:mainideal} to $I,$ it is enough to consider the monomials in the $U_i$ that does not represent a face of $\Delta$, monomials 
 $X_iF_i$ where $F_i$ is a monomial in the $U_j$ that does not represent a subface of $G_i$ and the binomials 
$X_i\tilde{G}_i-X_j\tilde{G}_j$ where $X_i\tilde{G}_i(f)= g_{ij}=X_j\tilde{G}_j(f)$ is a common subface of $g_i,g_j$. 
Let $M = U_1^{e_1}\ldots U_m^{e_m}$ be a monomial such that $M(f)=0$, since $U_i^2 \in I$ we can consider $M$ square free and suppose that it does not represent a face of $\Delta$. In this case, the first skeleton of $M$ represents a complete graph $K_l$ with the same vertex set of $G$, since, by hypothesis, 
for $3 \leq l \leq d-2$ all $K_l \subset G$ comes from a $l$-face of $\Delta$ and since $G$ does not contain a $K_{d-1}$ as subgraph, 
there exists $U_iU_j$ in $M$ such that $U_iU_j(f)=0$ and $M=U_iU_j\tilde{M}\in I_2Q$.\par
Let $\Omega = X_iM$ with $M = U_1^{e_1}\ldots U_m^{e_m}$ be a monomial such that $\Omega(f)=M(g_i)=0$, we can suppose that $M$ is square free and it does not represent a subface of $g_i$, hence there is a $U_j$ in $M$ that does not belongs to $G_i$, yielding $\Omega = X_iU_j\tilde{M} \in I_2Q$. \\
To finish the proof, consider the binomials $X_i\tilde{G}_i-X_j\tilde{G}_j$ where $X_i\tilde{G}_i(f)= g_{ij}=X_j\tilde{G}_j(f)$ and $g_{ij}$ is a common subface of $g_i,g_j$. 
If $\tilde{G_i}$ and $\tilde{G_j}$ are subfaces of the facets $G_i,G_j$ respectively and if $g_{ij} \subset G_i\cap G_j$ and the intersection is a $(d-3)$-face, then 
there are only two vertexes they do not share, say $u_i,u_j$, $\tilde{G}_i=U_iG_{ij}$ and $\tilde{G}_j=U_jG_{ij}$ and finally $X_i\tilde{G}_i-X_j\tilde{G}_j=(X_iU_i-X_jU_j)G_{ij}\in I_2Q$. 
In the general case, by the facet connection of $\Delta$, there exists a sequence of facets $G_{i_0}=G_i,G_{i_1},\ldots,G_{i_s}=G_j$ such that the intersection of two consecutive 
facets is a $(d-3)$-face, hence $X_i\tilde{G}_i-X_{i_1}\tilde{G}_{i_1},X_{i_1}\tilde{G}_{i_1}-X_{i_2}\tilde{G}_{i_2},\ldots,X_{i_s}\tilde{G}_{i_s}-X_j\tilde{G}_j\in I_2Q$, summing up 
we get the desired result.\par

Conversely, if $\Delta$ is not facet connected, let $g_j,g_j$ be two facets that can not be facet connected and let $g_{ij} = \gcd(g_ig_j)$. 
By Theorem \ref{thm:mainideal} it is easy to see that $X_i\tilde{G}_i-X_j\tilde{G}_j$ is a minimal 
generator of $I$ where $g_i=\tilde{g}_ig_{ij}$ and $g_j=\tilde{g}_jg_{ij}$. If $\Delta$ is not a flag complex, then there is a complete subgraph $K_s \subset G$ that does not came from a $s$-face of $\Delta$. In this case, if we choose $s$ to be minimal, then 
by Theorem \ref{thm:mainideal} the monomial $M=\displaystyle \prod_{v \in V(K_s)} v$ is a minimal generator of $I$.\par

\end{proof}

We introduce the following complexes inspired by the famous Turan's Graph Theorem characterizing maximal graphs not containing a complete subgraph $K_{d-1}$ as the $(d-2)$-partite complete graph 
$K(a_1,\ldots,a_{d-1})$ with $|a_i-a_j|\leq 1$ (c.f. \cite{Tu}). 

\begin{defin}\rm \label{def:turan} Let $2 \leq a_1 \leq \ldots \leq a_{d-1}$ be integers. The Turan complex of order $a_1,\ldots,a_{d-1}$, $\mathcal{K}=\mathcal{TK}(a_1,\ldots,a_{d-1})$, is the homogeneous simplicial complex whose facets set is the 
cartesian product $\pi = \displaystyle \prod_{i=1}^{d-1}\{1,2,\ldots,a_i\}$. The associated algebra is called the Turan algebra of order $(a_1,\ldots,a_{d-1})$ and denoted by $TA(a_1,\ldots,a_{d-1})$.
\end{defin}

\begin{thm} \label{thm:turanispresentedbyquadrics}
 Every Turan algebra $TA(a_1,\ldots,a_{d-1})$ is presented by quadrics. Its Hilbert vector is given by $h_k = s_{k}+s_{d-k}$ where $s_k = s_k(a_1,\ldots,a_{d-1})$ is the elementary symmetric polynomial of order $k$. 
\end{thm}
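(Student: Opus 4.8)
The plan is to deduce both assertions from the two main theorems already established. For the first claim, I would apply Theorem~\ref{thm:mainpresentedbyquadrics}: it suffices to check that the Turan complex $\mathcal{K}=\mathcal{TK}(a_1,\ldots,a_{d-1})$ is a facet connected flag complex of dimension $d-2$. The dimension statement is immediate, since each facet of $\mathcal{K}$ is an element of the cartesian product $\pi=\prod_{i=1}^{d-1}\{1,\ldots,a_i\}$, and such a facet, viewed as a set of vertices, has exactly $d-1$ elements (one from each of the $d-1$ disjoint blocks, each $a_i\geq 2$ so the vertex sets are genuinely distinct). For facet connection, given two facets $F=(f_1,\ldots,f_{d-1})$ and $F'=(f_1',\ldots,f_{d-1}')$ I would change coordinates one block at a time: each intermediate facet differs from the previous one in exactly one coordinate, so consecutive facets meet in a $(d-3)$-face. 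For the flag property I would argue that a clique $K_l$ in the $1$-skeleton corresponds to a set of $l$ vertices that are pairwise in distinct blocks (two vertices in the same block are never adjacent, since no facet contains two elements of the same block), hence $l\leq d-1$ and any such set extends to — indeed is contained in — a facet; this exhibits the required $(l-1)$-face.

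Next, for the Hilbert vector I would invoke Theorem~\ref{thm:mainideal}(4), which gives $h_k=\dim A_k=e_k+e_{d-k}$ where $e_k$ is the number of $(k-1)$-faces of $\mathcal{K}$. So the whole computation reduces to counting the $(k-1)$-faces of the Turan complex, and I claim this count is exactly $s_k=s_k(a_1,\ldots,a_{d-1})$. A $(k-1)$-face is a $k$-element subset of the vertex set that lies in some facet, i.e. (by the clique analysis above, or directly from the definition of $\mathcal{K}$ as the downward closure of $\pi$) a set consisting of at most one vertex from each block, with exactly $k$ blocks used. Choosing which $k$ of the $d-1$ blocks to use, and then one vertex from each chosen block $i$ (for which there are $a_i$ choices), gives precisely $\sum_{1\leq i_1<\cdots<i_k\leq d-1} a_{i_1}\cdots a_{i_k}=s_k(a_1,\ldots,a_{d-1})$ faces. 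Substituting into $h_k=e_k+e_{d-k}$ yields $h_k=s_k+s_{d-k}$, with the conventions $e_0=s_0=1$ and $e_j=s_j=0$ for $j\geq d$ matching those fixed before Theorem~\ref{thm:mainideal}.

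The main obstacle, such as it is, lies in the flag-complex verification: one must be careful that \emph{every} clique of the $1$-skeleton — not merely the "axis-aligned" ones — arises from a face. The key point is that adjacency in $\Delta_1$ forces two vertices to come from different blocks (since the only $2$-faces are pairs lying in a common facet, and a facet picks one vertex per block), so a clique automatically has its vertices spread across distinct blocks and therefore sits inside a facet; this is where the structure of the cartesian product is used essentially, and it is exactly the combinatorial content of Turan's theorem that $\Delta_1$ contains no $K_{d-1+1}=K_d$ while being maximal with this property. Once this is in place, facet connection is routine and the face count is an elementary symmetric-function identity, so I would keep both of those arguments brief.
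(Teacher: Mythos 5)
Your proposal is correct and follows essentially the same route as the paper: reduce the first claim to showing the Turan complex is a facet connected flag complex via Theorem~\ref{thm:mainpresentedbyquadrics} (facet connection by changing one coordinate block at a time, flagness from the fact that adjacent vertices lie in distinct blocks so every clique sits inside a facet), and get the Hilbert vector from $h_k=e_k+e_{d-k}$ of Theorem~\ref{thm:mainideal} together with the count $e_k=s_k$. Your clique argument is if anything slightly more explicit than the paper's (which handles $3\le l\le d-2$ and the non-existence of a large complete subgraph separately), but the substance is the same.
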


\begin{proof}
 By Theorem \ref{thm:mainpresentedbyquadrics}, the first claim is equivalent to prove that every Turan complex is a facet connected flag complex. 
 Let $2 \leq a_1 \leq \ldots \leq a_{d-1}$ be integers and consider the Turan complex $\mathcal{K}=\mathcal{TK}(a_1,\ldots,a_{d-1})$.\\
 To show that $\mathcal{K}$ is facet connected, let us consider $F,F'$ two of its facets. $F = \{x_1,\ldots,x_{d-1}\}$ and $F' = \{y_1,\ldots,y_{d-1}\}$ with $x_i,y_i \in \{1,\ldots,a_i\}$. 
 Consider the following sequence of facets in $\mathcal{K}$: $F_0=F$, $F_1 = (F \cup y_1) \setminus x_1$, we have that $F_0 \cap F_1$ is a $(d-3)$ face; and we construct inductively, for $k \in {1,\ldots,d-1}$, 
 $F_{k} = (F_{k-1} \cup y_k) \setminus x_k$. it is easy to see that $F_k \cap F_{k-1}$ is a $(d-3)$-face and that $F_{d-1} = F'$, therefore, $\mathcal{K}$ is facet connected as claimed. \\
 To show that $\mathcal{K}$ is a flag complex, first notice that $\mathcal{K}$ does not contain a complete graph $K_{d-1}$ in its first skeleton, by the $d-1$-coloration. 
 Let us consider any complete subgraph of the first skeleton $H = K_l \subset \mathcal{K}_1$ with $3 \leq l \leq d-2$. We can suppose without loss of generality that the vertex set of $H$ is $V = \{x_1,\ldots,x_l\}$ with 
 $x_i \leq a_i$. By definition of $\mathcal{K}$, there is a a facet of $\mathcal{K}$ whose vertex set contains $V$, by the definition of simplicial complex, there is a face of $\mathcal{K}$ such that the first skeleton is $H$ and the result follows.  \\
 The second claim follows from the fact that the number of $(k-1)$-faces of a Turan complex is $e_k=s_k$ where $s_k=s_k(a_1,\ldots,a_{d-1})$ is the elementary symmetric polynomial of order $k$.   
 By Theorem \ref{thm:mainpresentedbyquadrics}, the Hilbert vector of the Turan algebra $TA(a_1,\ldots,a_{d-1})$ is given by $h_k=s_k+s_{d-k}$.

\end{proof}

We now present a family of counterexamples to Migliore-Nagel conjectures that occur in large codimension with respect to the socle degree. 

\begin{cor}\label{cor:matador}
 Let $A=TA(a_1,\ldots,a_{d-1})$ be the Turan algebra of order $(a_1,\ldots,a_{d-1})$ with $a_1 \approx \ldots \approx a_{d-1}$ large enough. Then $\Hilb(A)$ is totally non-unimodal, 
 that is $$\dim A_1 > \dim A_2>\ldots>\dim A_{\lfloor \frac{d}{2} \rfloor}.$$
 \end{cor}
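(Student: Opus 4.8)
The plan is to use the Hilbert vector formula from Theorem~\ref{thm:turanispresentedbyquadrics}, namely $h_k = s_k(a_1,\ldots,a_{d-1}) + s_{d-k}(a_1,\ldots,a_{d-1})$, and estimate each summand when all $a_i$ are roughly equal to a large parameter $a$. Writing $a_i = a + \varepsilon_i$ with $|\varepsilon_i|$ bounded, the elementary symmetric polynomial $s_k$ is a sum of $\binom{d-1}{k}$ products of $k$ of the $a_i$, so $s_k = \binom{d-1}{k} a^k + O(a^{k-1})$; in particular $s_k$ has degree exactly $k$ in $a$ with positive leading coefficient. Since $d-k > d-k'$ whenever $k < k'$, the term $s_{d-k}$ dominates $s_{d-k'}$ for large $a$, so $h_k \sim \binom{d-1}{d-k} a^{d-k}$ as $a \to \infty$.

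First I would make the substitution precise and record that for $1 \le k \le \lfloor d/2 \rfloor$ we have $d - k \ge d - \lfloor d/2\rfloor = \lceil d/2 \rceil \ge k \ge 1$, so in this range $s_{d-k}$ is the higher-degree (in $a$) of the two summands of $h_k$, with leading term $\binom{d-1}{k} a^{d-k}$ (using $\binom{d-1}{d-k} = \binom{d-1}{k-1}$... more carefully $\binom{d-1}{d-k}$, the exact binomial is irrelevant beyond being a fixed positive constant). Then for consecutive indices $k$ and $k+1$ with $k+1 \le \lfloor d/2\rfloor$, I would compare
\[
h_k - h_{k+1} = \bigl(s_{d-k} - s_{d-k-1}\bigr) + \bigl(s_k - s_{k+1}\bigr).
\]
The first bracket is a polynomial in $a$ of degree $d-k$ with positive leading coefficient (since $d-k > d-k-1$), and the second bracket has degree $k+1 \le \lfloor d/2 \rfloor \le d - k$, so it is absorbed; hence $h_k - h_{k+1} > 0$ once $a$ exceeds an explicit bound depending only on $d$ and on how far the $a_i$ are allowed to spread. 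Taking $a$ (equivalently $a_1 \approx \cdots \approx a_{d-1}$) large enough that this holds simultaneously for all $k = 1,\ldots,\lfloor d/2\rfloor - 1$ gives $\dim A_1 > \dim A_2 > \cdots > \dim A_{\lfloor d/2\rfloor}$, which is the claim. One should also note $h_0 = 1 < h_1$ is automatic and that by Gorenstein duality $h_k = h_{d-k}$, so the displayed strict chain through the middle is exactly what fails unimodality in the strongest possible way.

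I do not expect a serious obstacle here: the argument is an elementary degree count on polynomials in one large parameter. The only mildly delicate point is making the hypothesis ``$a_1 \approx \cdots \approx a_{d-1}$ large enough'' rigorous, i.e.\ quantifying it as: there is a constant $C = C(d)$ and, for each fixed spread bound $B$ on $\max_i a_i - \min_i a_i$, a threshold $N(d,B)$ such that whenever $a_1 \ge N(d,B)$ and $a_{d-1} - a_1 \le B$, the chain of strict inequalities holds. Equally, one can sidestep the ``spread'' bookkeeping entirely by simply taking $a_1 = a_2 = \cdots = a_{d-1} = a$, in which case $s_k = \binom{d-1}{k} a^k$ exactly and $h_k = \binom{d-1}{k}a^k + \binom{d-1}{k}a^{d-k}$ wait, $\binom{d-1}{d-k}a^{d-k}$; then $h_k - h_{k+1}$ is visibly a polynomial in $a$ whose top term is $\binom{d-1}{d-k}a^{d-k} > 0$, finishing the proof with no estimates at all beyond "a positive-leading-coefficient polynomial in $a$ is eventually positive." I would present the equal-$a_i$ case as the clean proof and remark that the same works for nearly-equal $a_i$ by continuity of the leading-term comparison.
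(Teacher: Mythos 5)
Your proposal is correct and follows essentially the same route as the paper: both use the formula $h_k=s_k+s_{d-k}$ from Theorem~\ref{thm:turanispresentedbyquadrics} and compare $h_k$ with $h_{k+1}$ by observing that for $k+1\le\lfloor d/2\rfloor$ the dominant term of $h_k$ is of degree $d-k$ in the large parameter $a$ while $h_{k+1}$ has degree only $d-k-1$, so the inequality holds for $a$ large. Your extra bookkeeping (explicit spread bound, and the clean case $a_1=\cdots=a_{d-1}=a$) just makes precise the paper's informal ``$a_1\approx\cdots\approx a_{d-1}$ large enough'' and ``trivial Calculus I argument.''
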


\begin{proof} 
If $a_1 \approx \ldots \approx a_{d-1} \approx a$ are large enough, then, by a trivial Calculus I argument, we get for $2\leq k+1 \leq \lfloor \frac{d}{2} \rfloor$, $k < d-k$ and $d-k> d-k-1 \geq k+1$:

  $$\dim A_k \approx \binom{d-1}{k}a^k+\binom{d-1}{d-k}a^{d-k}>\binom{d-1}{k+1}a^{k+1}+\binom{d-1}{d-k-1}a^{d-k-1} \approx \dim A_{k+1}.$$  In this case, 
  the Hilbert vector $\Hilb(A)$ is totally non-unimodal.
 \end{proof}

{\bf Acknowledgments}.
We wish to thank Francesco Russo for his suggestions and conversations on the subject and the participants to the Commutative Algebra/Algebraic Geometry Seminar of the 
{\it Università di Catania}. The first author thanks the Sicilian hospitality he found everywhere in Catania, rendering his long staying there very pleasant and fruitful. The first author also thanks 
the participants of BIRS-workshop on Lefschetz porperties and artinian algebras for inspiring conversations on the subject. 
The first author was Partially supported  by the CAPES postdoctoral fellowship, Proc. BEX 2036/14-2. 
The second author is part of the Research Project of the University of Catania FIR 2014 ``Aspetti geometrici e algebrici della Weak e Strong Lefschetz Property".

\end{document}